\newtheorem{theorem}{Theorem}%[section]
\newtheorem{lemma}{Lemma}
\theoremstyle{definition}
\theoremstyle{acknowledgement}
\newtheorem{definition}[theorem]{Definition}
\theoremstyle{remark}
\newcommand{\R}{\mathbb{R}}
\newcommand{\N}{\mathbb{N}}
\newcommand{\Z}{\mathbb{Z}}
\newcommand{\cA}{\mathcal{A}}
\newcommand{\cB}{\mathcal{B}}
\newcommand{\cC}{\mathcal{C}}
\newcommand{\cF}{\mathcal{F}}
\newcommand{\cG}{\mathcal{G}}
\newcommand{\cH}{\mathcal{H}}
\newcommand{\Corr}{\mbox{Corr}}
\newcommand{\dist}{\mbox{dist}}
\numberwithin{equation}{section}
\numberwithin{theorem}{section}
\numberwithin{lemma}{section}
\numberwithin{remark}{section}
\begin{document}

\title[On Mixing Properties of Some INAR Models]{On Mixing Properties of Some INAR Models}

\dedicatory{Dedicated to the memory of Mikhail Gordin}
%\vskip.6in
\author[Richard C.\ Bradley]{
Richard C.\ Bradley}
\address{Department of Mathematics\\
Indiana University\\
Bloomington, Indiana 47405, USA}

\email{bradleyr@indiana.edu}

\begin{abstract}  Strictly stationary INAR(1) (``integer-valued autoregressive processes of order 1'') with Poisson innovations are ``interlaced $\rho$-mixing''.
\end{abstract}

\maketitle

\section{Introduction}
\label{sc1}

   The INAR processes, or ``integer-valued autoregressive processes'', are a variant of the usual autoregressive processes in time series analysis. In various references, the INAR processes and variations on them have been studied as models to use in the statistical analysis of ``count data''.  See e.g.\  \cite{8}, \cite{12}, \cite{14}, \cite{17}, \cite{18}, and the references therein.  In \cite{17}, for certain INAR(1) processes (``integer-valued autoregressive processes of order~1''), and certain variations on them, strong mixing and even absolute regularity were verified, with exponential mixing rate.

   In the study of INAR processes, the $\rho$-mixing condition does not seem to have gotten much attention, but it could perhaps play a useful role as well, given the extensive literature on limit theory under $\rho$-mixing that has been developed since early results on that topic such as in \cite{10} and \cite{16}.

   This note here will go in a little different direction. Within the INAR processes, one particularly prominent subclass is the strictly stationary INAR(1) processes with ``Poisson innovations''.  For that subclass (and some other related processes), absolute regularity with exponential mixing rate was already verified in \cite{17}. In this note, for that prominent subclass, we shall verify the $\rho^*$-mixing (``interlaced $\rho$-mixing'') condition, which is stronger than $\rho$-mixing. (Both of those latter two mixing conditions and that subclass of processes will be explicitly formulated below.) The Poisson innovations seem to facilitate the study of the $\rho^*$-mixing condition for that subclass. The techniques in this note involving $\rho^*$-mixing can apparently be extended to some limited extent to some other INAR processes, and even to some variations on them such as ones in \cite{17}. For simplicity, this note will be confined to just the subclass identified above.

   The processes in the prominent subclass discussed above are strictly stationary, countable-state Markov chains.  It is well known and elementary that for Markov chains, for either the $\rho$-mixing condition or the $\rho^*$-mixing condition, the mixing rate is automatically (at least) exponential. Now strictly stationary, finite-state, irreducible, aperiodic Markov chains are $\rho^*$-mixing (see \cite{1} or \cite[Theorem 7.15]{3}).  However, for strictly stationary, countable-state Markov chains in general, $\rho$-mixing does not imply $\rho^*$-mixing. (Counterexamples are constructed in \cite{2} and \cite{4}, with the ones in the latter reference being reversible.) For the INAR processes in general, and in particular for the (Markovian) INAR(1) processes whose innovations are not Poisson, there is more to explore regarding the $\rho$-mixing and $\rho^*$-mixing conditions and the connections between them.

   Now let us formulate the $\rho$-mixing and $\rho^*$-mixing conditions, define the subclass of processes that will be studied here, and then give the main result.

   Suppose $X:=(X_k$, $k\in\Z)$ is a strictly stationary sequence of random variables on a probability space $(\Omega,\cF,P)$. For any two $\sigma$-fields $\cA$ and $\cB \subset \cF$, define the ``maximal correlation'' \cite{9}:
   \[
   \rho(\cA,\cB) := \sup |\Corr(f,g)|
   \]
where the supremum is taken over all pairs of square-integrable random variables $f$ and $g$ such that $f$ is $\cA$-measurable and $g$ is $\cB$-measurable. For each positive integer $n$, define the following two dependence coefficients (for the given strictly stationary sequence $X$):
\begin{equation}\label{eq1.1}
\rho(X,n) := \rho(\sigma(X_k, k\le 0), \sigma(X_k,k\ge n))
\end{equation}
and
\begin{equation}\label{eq1.2}
\rho^*(X,n) := \sup \rho(\sigma(X_k,k\in S), \sigma(X_k, k\in T))
\end{equation}
where the supremum is taken over all pairs of nonempty, disjoint sets $S,T\subset \Z$ such that
\begin{equation}
\label{eq1.3}
\dist(S,T) := \min_{s\in S,t\in T} |s-t| \ge n.
\end{equation}
In (\ref{eq1.1}), (\ref{eq1.2}), and below, the notation $\sigma(\dots)$ means the $\sigma$-field generated by $(\dots)$. In (\ref{eq1.2})--(\ref{eq1.3}), the sets $S$ and $T$ can be ``interlaced'', with each set containing elements between ones in the other set.  The (strictly stationary) sequence $X$ is said to be ``$\rho$-mixing'' (a condition introduced in \cite{11}) if $\rho(X,n) \to 0$ as $n\to \infty$, and $\rho^*$-mixing (a condition apparently first studied in \cite{19}) if $\rho^*(X,n) \to0$ as $n\to \infty$. Obviously $\rho(X,n) \le \rho^*(X,n)$ for each $n\ge 1$, and (hence) $\rho^*$-mixing implies $\rho$-mixing.

The following terminology will be useful.

\begin{definition}\label{df1.1}
An ordered triplet $(\cA,\cB,\cC)$ of $\sigma$-fields $(\subset \cF)$ will be called a ``Markov triplet'' if any (hence all) of the following three equivalent conditions holds:

\renewcommand{\labelenumi}{(\roman{enumi})}

\begin{enumerate}
\item for all $C\in \cC$, $P(C|\cA \vee \cB) = P(C|\cB)$ a.s.;
\item for all $A\in \cA$ and all $C\in \cC$, $P(A\cap C|\cB) = P(A|\cB) \cdot P(C|\cB)$ a.s.;
\item for all $A\in \cA$, $P(A\mid \cB\vee \cC) = P(A\mid \cB)$ a.s.
\end{enumerate}

\end{definition}

The following elementary observation will be useful later on: If $(\cA,\cB,\cC)$ is a Markov triplet, then $(\cA \vee \cB, \cB,\cB\vee \cC)$ is a Markov triplet, and (hence) for any $\sigma$-fields $\cG \subset \cA\vee \cB$ and $\cH \subset \cB \vee \cC$, $(\cG, \cB, \cH)$ is a Markov triplet.

In what follows, $\N$ denotes the set of all positive integers, and $\overline{\N} := \N \cup \{0\}$ denotes the set of all nonnegative integers.

\begin{definition}\label{df1.2}
Suppose $a\in (0,1)$ and $\lambda>0$. A strictly stationary ``INAR(1) process with Poisson innovations'' (with parameters $a$ and $\lambda$), is a strictly stationary Markov chain $X:= (X_k, k\in \Z)$ with state space $\overline{\N}$, with $X$ having the following ``structural'' properties: There exist random variables $U_k, V_k$, $k\in \Z$ for which the following conditions hold:
\renewcommand{\labelenumi}{(\roman{enumi})}

\begin{enumerate}
\item For each $k\in \Z$, $X_k = U_k + V_k$.
\item For each $k\in \Z$ and each $x\in \overline{\N}$, the conditional distribution of $U_k$ given $\{X_{k-1}=x\}$ is binomial with parameters $x$ and $a$.
\item For each $k\in \Z$, the ordered triplet of $\sigma$-fields
\[
(\sigma(U_j, V_j, X_j, j\le k-1),\, \sigma(X_{k-1}), \sigma(U_k))
\]
is a Markov triplet.
\item For each $k\in \Z$, the distribution of the random variable $V_k$ is Poisson with mean~$\lambda$.
\item For each $k\in \Z$, the random variable $V_k$ is independent of the $\sigma$-field $\sigma(U_j,V_j,X_j,$ $j\le k-1)\vee \sigma(U_k)$.
\end{enumerate}
\end{definition}

In Definition \ref{df1.2}, for a given $k\in \Z$, the random variable $V_k$ is the  ``Poisson innovation''.  It is well known and elementary (see e.g.\ \cite{17}) that in the context of Definition \ref{df1.2}, the (invariant) marginal distribution of each $X_k$ is Poisson with mean $\lambda/(1-a)$.

Here is the main result of this note:

\begin{theorem}\label{th1.3}
Suppose $a\in (0,1)$ and $\lambda>0$; and suppose $X := (X_k, k\in \Z)$ is the strictly stationary INAR(1) process (Markov chain) in Definition~\ref{df1.2}, meeting all conditions there (including the Poisson ($\lambda$) ``innovations''). Then $X$ is $\rho^*$-mixing (with $\rho^*(X,n)\to 0$ at least exponentially fast as $n\to\infty$).
\end{theorem}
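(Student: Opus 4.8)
The plan is to exploit the Poisson structure to realize $X$ as a deterministic functional of a family of mutually independent Poisson random variables, and then to control the maximal correlation between two arbitrary disjoint index sets by tracking only the randomness that is genuinely shared between them. First I would record the immigration--death (thinning) representation of the chain: one attaches to each individual a ``life interval'' $I=[b,d]\subset\Z$ and, for each such interval, lets $N_I$ be the number of individuals alive exactly throughout $I$. The Poisson innovations together with the binomial thinning make the family $\{N_I\}$ mutually independent, with $N_I$ Poisson of mean $\lambda(1-a)a^{d-b}$, and $X_k=\sum_{I\ni k}N_I$. I would verify that this $\{N_I\}$-functional is a version of the chain of Definition~\ref{df1.2} (the marginal of each $X_k$ is then automatically Poisson with mean $\lambda/(1-a)$, and the one-step law reproduces the thinning-plus-Poisson dynamics). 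This representation is where the hypothesis does all the work: for non-Poisson innovations no such independent decomposition is available.

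Next, fix nonempty disjoint $S,T\subset\Z$ with $\dist(S,T)\ge n$, and classify each interval $I$ according to whether it meets $S$ only, $T$ only, both, or neither. Set $\cG:=\sigma(X_k,k\in S)$ and $\cH:=\sigma(X_k,k\in T)$, and let $\cD$ be the $\sigma$-field generated by the $N_I$ for the ``spanning'' intervals, those meeting both $S$ and $T$. Since $(X_k)_{k\in S}$ is a function of the $S$-meeting variables and $(X_k)_{k\in T}$ of the $T$-meeting variables, and these two families share only the spanning variables while the remaining ones are independent, $\cG$ and $\cH$ are conditionally independent given $\cD$; that is, $(\cG,\cD,\cH)$ is a Markov triplet. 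By the (standard) submultiplicativity of the maximal correlation along a Markov triplet, $\rho(\cG,\cH)\le\rho(\cG,\cD)\,\rho(\cD,\cH)\le\rho(\cG,\cD)$. Thus $\rho^*(X,n)$ is bounded by the supremum over all such $S,T$ of the one-sided coefficient $\rho(\cG,\cD)$, which measures how much the $S$-process ``sees'' of the spanning individuals.

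It then remains to show that $\rho(\cG,\cD)\to0$ exponentially, uniformly in $S,T$. Every spanning interval has length $\ge n$, so each spanning $N_I$ has mean at most $\lambda(1-a)a^{n}$, and relative to the order-one variance of each $X_k$ the spanning contribution is of size $a^{n}$. I would make this precise through the Charlier-polynomial eigenstructure of the Poisson law: the variables $N_I$ enter $X_k$ and $\cD$ additively, the extremal correlating functions are carried by the first-order (linear) Charlier components, and the problem reduces to an operator-norm estimate on the cross-covariance array $[a^{|k-l|}]_{k\in S,\,l\in T}$. A Schur-test bound, together with the uniform boundedness of the inverse covariance of $(X_k)_{k\in S}$ (the spectrum of $[a^{|i-j|}]$ lies in $[\tfrac{1-a}{1+a},\tfrac{1+a}{1-a}]$), then yields $\rho(\cG,\cD)\le C(a)\,a^{cn}$, whence $\rho^*(X,n)\le C(a)\,a^{cn}\to0$, the asserted (at least) exponential rate.

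The main obstacle is precisely this last estimate. The naive attempt---bounding by the \emph{total} mean of the spanning variables---fails, because for interlaced $S,T$ there can be infinitely many spanning individuals and that total mean can diverge, while $\rho(\cG,\cD)$ nevertheless stays small. The correct control must be local: maximal correlation tensorizes (it equals the supremum, not the sum) over independent blocks, so it is governed by the spanning mass \emph{per coordinate} relative to the per-coordinate variance, which is $O(a^{n})$ irrespective of the interlacing. Converting this heuristic into a clean uniform bound---that is, rigorously reducing the Poisson-functional maximal correlation to its first-order Charlier part and then applying the Schur/tensorization estimate---is the technical heart of the argument.
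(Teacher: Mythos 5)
Your starting point --- representing $X$ as an additive functional of an independent family of Poisson counts $N_I$ indexed by life intervals, with $N_{[b,d]}$ Poisson of mean $\lambda(1-a)a^{d-b}$ and $X_k=\sum_{I\ni k}N_I$ --- is correct and is essentially the construction the paper uses in Step 1 of Section \ref{sc4} (there the components are grouped by birth time into independent pure-death chains $Y^{(\ell)}$, with $Y^{(\ell)}_j=\sum_{d\ge \ell+j}N_{[\ell,d]}$). Your conditional-independence observation, that $(\cG,\cD,\cH)$ is a Markov triplet when $\cD$ is generated by the spanning counts, is also correct, as is the submultiplicativity $\rho(\cG,\cH)\le\rho(\cG,\cD)\,\rho(\cD,\cH)$. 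But the proof is not complete: everything hinges on the uniform bound $\rho(\cG,\cD)\le C(a)a^{cn}$, and you leave that as a heuristic. The claim that the extremal correlating functions are ``carried by the first-order (linear) Charlier components'' is classical for a single pair $(X,W)=(W+R,W)$ of Poisson variables with a shared component, but its extension to $\sigma$-fields generated by infinite families $(X_k)_{k\in S}$ versus $(N_I)_{I\ \mathrm{spanning}}$ --- i.e., that the vector-valued maximal correlation problem diagonalizes in the Charlier chaos and is dominated by the first chaos --- is not a routine reduction; it is precisely what would have to be proved. As written, your argument controls only the linear part of the correlation and does not rule out higher-order functionals, so there is a genuine gap at what you yourself call the technical heart.

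The paper closes this gap by never needing a quantitative correlation estimate of that kind. Instead of partitioning the $N_I$ by their relation to a particular pair $(S,T)$, group them by birth time: $X_k=\sum_{\ell\le k}Y^{(\ell)}_{k-\ell}$, where the shifted generation processes $\widetilde Y^{(\ell)}$ are independent of one another. The Cs\'aki--Fischer lemma (Lemma \ref{lm2.1} --- the tensorization fact you invoke only heuristically, that maximal correlation over independent blocks is a supremum rather than a sum) then gives directly $\rho^*(X,n)\le\sup_\ell\rho^*(\widetilde Y^{(\ell)},n)$, reducing the whole problem to the $\rho^*$-mixing of a single pure-death chain started from a Poisson law. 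That chain is handled by a second application of Cs\'aki--Fischer (splitting the initial Poisson, approximated by a binomial, into independent Bernoulli particles) together with an elementary absorbing-state estimate on the coefficient $\lambda(\cA,\cB)$ (Lemmas \ref{lm2.2}--\ref{lm2.4}); no Charlier expansion or operator-norm bound is required. Redirecting your tensorization idea from the ``per-coordinate spanning mass'' to the ``per-generation'' decomposition is exactly what closes the argument.
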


The proof of this theorem will be carried out through Sections \ref{sc2}, \ref{sc3}, and \ref{sc4} below. From that proof, one can see that the (of course exponential) mixing rate for $\rho^*$-mixing in Theorem \ref{th1.3} essentially depends only on (an upper bound for) the parameter $a$, not on~$\lambda$.

\section{Preliminaries}\label{sc2}
Throughout the rest of this note, the setting will be a probability space $(\Omega, \cF,P)$, rich enough to accommodate all random variables specified. Random variables are real-valued (and often integer-valued or even $\{0,1\}$-valued) unless specified otherwise.

Section \ref{sc2} here will be devoted to some lemmas that will be used in the proof of Theorem~\ref{th1.3}.

The following lemma is due to Cs\'aki and Fisher \cite{7}. (The proof given there has a flaw.  For a fully correct proof, see \cite{20} or \cite[Theorem 6.1]{3}.)

\begin{lemma}\label{lm2.1}
Suppose $\cA_n$ and $\cB_n$, $n\in \N$ are $\sigma$-fields $(\subset \cF)$, and the $\sigma$-fields $\cA_n\vee\cB_n$, $n\in\N$ are independent. Then
\[
\rho\left(\bigvee_{n\in\N} \cA_n, \bigvee_{n\in\N}\cB_n\right) = \sup_{n\in \N} \rho(\cA_n,\cB_n).
\]
\end{lemma}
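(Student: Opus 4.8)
The plan is to prove this by establishing the two inequalities separately. One direction is trivial, the other is the substance of the lemma.

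The easy inequality is $\rho(\bigvee_n\cA_n,\bigvee_n\cB_n)\ge\sup_n\rho(\cA_n,\cB_n)$. For each fixed $m\in\N$, any square-integrable $\cA_m$-measurable $f$ is also $\bigvee_n\cA_n$-measurable, and likewise any $\cB_m$-measurable $g$; so the supremum defining $\rho(\bigvee_n\cA_n,\bigvee_n\cB_n)$ is taken over a larger class of pairs than the one defining $\rho(\cA_m,\cB_m)$. Taking the sup over $m$ gives the inequality. This requires no use of the independence hypothesis.

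For the hard inequality $\rho(\bigvee_n\cA_n,\bigvee_n\cB_n)\le\sup_n\rho(\cA_n,\cB_n)$, set $r:=\sup_n\rho(\cA_n,\cB_n)$ and let $f\in L^2(\bigvee_n\cA_n)$, $g\in L^2(\bigvee_n\cB_n)$, both of mean zero. I want $|\mbox{E}(fg)|\le r\,\|f\|_2\,\|g\|_2$. The key structural idea is to exploit the independence of the blocks $\cA_n\vee\cB_n$ via a martingale/orthogonal-decomposition argument. First I would reduce to the case where $f$ and $g$ depend on only finitely many coordinates (by an $L^2$-approximation, since $\bigcup_N\bigvee_{n\le N}(\cA_n\vee\cB_n)$ generates $\bigvee_n(\cA_n\vee\cB_n)$ and $L^2$ of the generated field is the closure of the union), so that it suffices to handle $f\in L^2(\bigvee_{n\le N}\cA_n)$, $g\in L^2(\bigvee_{n\le N}\cB_n)$ for fixed $N$. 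Then I would decompose $f$ and $g$ orthogonally using the independent blocks: writing $\cG_n:=\cA_n\vee\cB_n$, the independence of the $\cG_n$ makes the ``ANOVA'' (Hoeffding-type) decomposition of any mean-zero function into a sum over subsets $\emptyset\ne I\subset\{1,\dots,N\}$ of components depending only on the coordinates in $I$, and these components are mutually orthogonal across distinct $I$. Crucially, if $f$ is $\bigvee_n\cA_n$-measurable then each of its components lives in the $\cA$-fields, and similarly for $g$; moreover $\mbox{E}(fg)$ picks up only matching index-sets $I$, and within a matching block the correlation between the $\cA_I$-part and the $\cB_I$-part is controlled coordinatewise by $r$.

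The step I expect to be the main obstacle is making the bound $r$ propagate correctly through a multi-coordinate block: showing that for a nonempty $I$, the correlation of an $\bigvee_{n\in I}\cA_n$-measurable function with an $\bigvee_{n\in I}\cB_n$-measurable function is still at most $r$ rather than degrading with $|I|$. The clean way to do this is induction on $|I|$ using a conditioning (tensorization) argument: condition on all coordinates but one, use $\rho(\cA_n,\cB_n)\le r$ on that coordinate together with the independence to factor the conditional expectation, and combine via Cauchy–Schwarz so that the constant $r$ is not amplified. This tensorization of maximal correlation under independence is exactly the content that the flawed original argument of \cite{7} mishandled, which is why the corrected treatments in \cite{20} and \cite[Theorem 6.1]{3} are cited; I would follow that corrected route, assembling the coordinatewise bound into the global estimate $|\mbox{E}(fg)|\le r\,\|f\|_2\,\|g\|_2$ and then dividing by the norms to conclude $\Corr(f,g)\le r$.
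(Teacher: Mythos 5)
The paper does not prove Lemma \ref{lm2.1} at all: it is quoted as a known result of Cs\'aki and Fischer, with the reader sent to \cite{20} or \cite[Theorem 6.1]{3} for a correct proof. So there is nothing in the paper to match your argument against; what you have written is an outline of essentially the standard corrected proof from those references, and the outline is sound. The easy inequality is handled correctly. For the hard inequality, the reduction to finitely many blocks by martingale convergence in $L^2$, the Hoeffding/ANOVA decomposition $f=\sum_{I}f_I$ with respect to the independent blocks $\cG_n=\cA_n\vee\cB_n$, the vanishing of the off-diagonal terms $E(f_Ig_J)$ for $I\ne J$, and the diagonal bound $|E(f_Ig_I)|\le r\|f_I\|_2\|g_I\|_2$ followed by Cauchy--Schwarz over $I$ do assemble into a complete proof. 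Two points deserve explicit verification if you write this out in full: first, that the ANOVA component $f_I$ of an $\bigvee_n\cA_n$-measurable $f$ is genuinely $\bigvee_{n\in I}\cA_n$-measurable (this follows from the identity $E(f\mid\cG_J)=E(f\mid\cA_J)$, which itself uses the block independence via a monotone-class argument); second, that in the diagonal bound you may apply $\rho(\cA_n,\cB_n)\le r$ conditionally on $\cG_{I\setminus\{n\}}$ --- here you need both the ANOVA degeneracy $E(f_I\mid\cG_{I\setminus\{n\}})=0$ (so the conditional means vanish) and a regular-conditional-distribution/Fubini argument using the fact that block $n$ is independent of the frozen blocks, after which one integrates and applies Cauchy--Schwarz so that the constant is $r$ and not something growing with $|I|$. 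You have correctly identified this tensorization step as the crux; with those details supplied your route is exactly the one in the cited sources.
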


Next, for any two $\sigma$-fields $\cA$ and $\cB$ $(\subset \cF)$, define the following measure of dependence:
\begin{equation}
\label{eq2.1}
\lambda(\cA,\cB) := \sup \frac{ |P(A\cap B) - P(A)P(B)|}{[P(A)]^{1/2}[P(B)]^{1/2}}
\end{equation}
where the supremum is taken over all pairs of events $A\in \cA$ and $B\in \cB$ such that $P(A)>0$ and $P(B)>0$.

\begin{lemma}
\label{lm2.2}
For any $\varepsilon >0$, there exists $\delta = \delta(\varepsilon)>0$ such that the following holds: If $\cA$ and $\cB$ are $\sigma$-fields such that $\lambda(\cA,\cB) \le \delta$, then $\rho(\cA,\cB) \le \varepsilon$.
\end{lemma}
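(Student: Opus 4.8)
The plan is to reduce the estimate on $\rho(\cA,\cB)$ to a computation with indicator functions, where the hypothesis $\lambda(\cA,\cB)\le\delta$ can be applied directly. Recall that $\rho(\cA,\cB)$ is the supremum of $|\Corr(f,g)|$ over centered, square-integrable $f$ and $g$ that are respectively $\cA$- and $\cB$-measurable; equivalently it is the norm of the conditional-expectation operator $g\mapsto E(g\mid\cA)$ from the mean-zero part of $L^2(\cB)$ into that of $L^2(\cA)$. So it suffices to bound $|E(fg)|$ with $Ef=Eg=0$ and $\|f\|_2=\|g\|_2=1$, and by approximating $f,g$ by simple functions one may even assume $\cA$ and $\cB$ are generated by finite partitions (since $\lambda$ only decreases on sub-$\sigma$-fields while $\rho$ is approximated from below).

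First I would record the one clean estimate that the $\sqrt{P(A)}\,\sqrt{P(B)}$ normalization in $\lambda$ is designed to give: for every $B\in\cB$,
\[
\big\|E(\mathbf 1_B\mid\cA)-P(B)\big\|_2^2\le\lambda(\cA,\cB)\,P(B).
\]
This follows by writing $\psi:=E(\mathbf 1_B\mid\cA)\in[0,1]$, noting $\|\psi-P(B)\|_2^2=\operatorname{Cov}(\psi,\mathbf 1_B)$, and then using the layer-cake representation $\psi=\int_0^1\mathbf 1_{\{\psi>u\}}\,du$ together with $|\operatorname{Cov}(\mathbf 1_{\{\psi>u\}},\mathbf 1_B)|\le\lambda\sqrt{P(\psi>u)}\sqrt{P(B)}$ and $\int_0^1\sqrt{P(\psi>u)}\,du\le(E\psi)^{1/2}=\sqrt{P(B)}$. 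The crucial point is that $\psi$ is bounded, so this layer-cake integral runs over the finite interval $[0,1]$ and converges with no tail contribution. By Cauchy--Schwarz this already yields $|\Corr(f,\mathbf 1_B)|\le\sqrt{2\lambda}$ for every centered $f\in L^2(\cA)$ and every $B$ with $P(B)\le\tfrac12$ (and symmetrically with the roles reversed), i.e.\ small $\lambda$ forces every correlation that involves at least one indicator to be small.

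The hard part is upgrading this to genuine square-integrable $g$ on both sides. The naive route --- apply the layer-cake representation to the extremal $g$ and sum the indicator bounds via Minkowski's inequality --- fails, because $\int_{\R}\sqrt{P(g>t)\wedge P(g\le t)}\,dt$ is not controlled by $\|g\|_2$ (it diverges for heavy-tailed $g$), so the pointwise indicator bound is too lossy and discards essential cancellation. I would instead decompose the extremal functions dyadically according to the size of $|f|$ and $|g|$, treat each dyadic shell as a multiple of an indicator to which the estimate above applies, and control the sum over shells by exploiting near-orthogonality between distant shells rather than bounding term by term; the heavy tails then contribute negligibly because their total $L^2$ mass is bounded by the unit variance, and only $O(\log(1/\lambda))$ shells matter. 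This is precisely the mechanism of the (measure-weighted) converse expander mixing lemma, and the non-negativity of the joint law --- that each $\operatorname{Cov}(\mathbf 1_A,\mathbf 1_B)$ comes from an honest probability --- is what I expect to prevent a blow-up that grows with the number of states. Carrying this out should produce $\rho(\cA,\cB)\le\eta(\lambda(\cA,\cB))$ with an explicit $\eta(\delta)\to0$ as $\delta\to0$ (of square-root-times-logarithm type), and then choosing $\delta$ so small that $\eta(\delta)\le\varepsilon$ finishes the proof.

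I expect this dyadic summation to be the main obstacle: the reduction and the boxed indicator estimate are short computations, but making the passage from indicators to arbitrary $L^2$ functions quantitative, without losing a factor that grows with the partition size, requires the careful shell-by-shell cancellation argument, and extracting a clean $\delta(\varepsilon)$ from it is the crux.
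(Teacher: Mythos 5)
The paper does not actually prove Lemma \ref{lm2.2}; it quotes it as a known result and points to \cite{5}, \cite{6}, \cite[Theorem 4.15]{3}, and \cite{15} for proofs (the sharp form being $\rho \le C\lambda(1+|\log\lambda|)$). So your attempt has to be measured against that cited literature rather than against an argument in the text. Your reduction to centered unit-norm $f,g$ and your boxed estimate $\|E(\mathbf 1_B\mid\cA)-P(B)\|_2^2\le\lambda(\cA,\cB)\,P(B)$ are correct, and they are indeed the standard first step.

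The genuine gap is exactly where you flag it: the passage from indicators to general $f,g\in L^2$ is only sketched, and the mechanism you describe does not close it. A shell-by-shell application of the indicator bound does handle the \emph{middle} range where $|f|,|g|\in[\lambda,\lambda^{-1}]$: there are $O(\log(1/\lambda))$ dyadic shells there, each pair contributes at most $C\lambda\|f_k\|_2\|g_l\|_2$, and plain Cauchy--Schwarz over the shells gives $C\lambda\log(1/\lambda)$ --- no ``near-orthogonality between distant shells'' is needed or used for this part. The piece with $|f|<\lambda$ is harmless since its $L^2$ norm is at most $\lambda$. What is \emph{not} handled is the heavy tail $f\mathbf 1_{\{|f|>\lambda^{-1}\}}$ (and symmetrically for $g$): it lives on a set of probability at most $\lambda^{2}$, but its $L^2$ norm can still be of order $1$, so your claim that the heavy tails ``contribute negligibly because their total $L^2$ mass is bounded by the unit variance'' is a non sequitur --- bounded by the unit variance means bounded by $1$, not small. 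Bounding $E\bigl[f\mathbf 1_{\{|f|>T\}}\,g\bigr]$ requires showing that $g$ cannot concentrate its variance on the small $\cA$-measurable set $\{|f|>T\}$, and quantifying that in terms of $\lambda$ is precisely the delicate point that the cited proofs (via a Lorentz-norm/real-interpolation argument) are built to overcome. As written, your argument establishes the lemma only for $f,g$ bounded by a fixed multiple of $\lambda^{-1}$; the crux you yourself identify as ``the main obstacle'' is left open, so the proof is incomplete.
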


Quite sharp versions of Lemma \ref{lm2.2} can be found in \cite{5}, \cite{6}, \cite[Theorem 4.15]{3}, and in a very sharp form, \cite{15}.

\begin{lemma}\label{lm2.3}
Suppose $0<\varepsilon\le 1/9$. Suppose $(X_1,X_2,X_3,\dots)$ is a sequence of random variables such that for each $n\ge 2$, $P(X_n =0\mid X_{n-1} =0) =1$ and
\[
P(X_n = 0 \mid\sigma(X_1,X_2,\dots,X_{n-1})) \ge 1-\varepsilon\,\,\mbox{a.s.}
\]
Then
\[
\lambda (\sigma(X_1,X_3,X_5,X_7,\dots), \sigma(X_2,X_4,X_6,X_8,\dots)) \le 3\varepsilon^{1/2}.
\]
\end{lemma}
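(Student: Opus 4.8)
The plan is to reduce the claim to a single normalized covariance estimate and then exploit the ``absorption at $0$'' built into the hypotheses. First I record the structural facts. Writing $D_n:=\{X_n\neq 0\}$, the condition $P(X_n=0\mid X_{n-1}=0)=1$ says exactly that $D_n\subseteq D_{n-1}$, so the events $D_1\supseteq D_2\supseteq\cdots$ are nested and decreasing; and since $D_n\subseteq D_{n-1}\in\sigma(X_1,\dots,X_{n-1})$, the bound $P(X_n=0\mid \sigma(X_1,\dots,X_{n-1}))\ge 1-\varepsilon$ gives $P(D_n)\le \varepsilon\,P(D_{n-1})$, hence $P(D_n)\le \varepsilon^{\,n-1}$; in particular $P(D_2)\le\varepsilon$. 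Set $\cA:=\sigma(X_1,X_3,X_5,\dots)$ and $\cB:=\sigma(X_2,X_4,X_6,\dots)$. Since $\lambda(\cA,\cB)$ is a supremum over events, it suffices to bound $|P(A\cap B)-P(A)P(B)|\big/\sqrt{P(A)P(B)}$ for arbitrary $A\in\cA$, $B\in\cB$ with $P(A),P(B)>0$.

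The key observation is that off the rare event $D_2$ all even-indexed coordinates vanish, so on $\{X_2=0\}$ the indicator $\mathbf 1_B$ is constant. Hence either $B\subseteq D_2$ (so $P(B)\le\varepsilon$) or $\{X_2=0\}\subseteq B$ (so $P(B^c)\le\varepsilon$). The second case is elementary: using $|P(A\cap B)-P(A)P(B)|=|P(A\cap B^c)-P(A)P(B^c)|\le \min(P(A),P(B^c))\le\sqrt{P(A)P(B^c)}\le\sqrt{\varepsilon P(A)}$ and $P(B)\ge 1-\varepsilon$, the ratio is at most $\sqrt{\varepsilon/(1-\varepsilon)}$, which is $\le 3\sqrt\varepsilon$ because $\varepsilon\le 1/9$. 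Thus everything reduces to the case $B\subseteq D_2$.

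For that case I would write the quantity as $\mathrm{Cov}(\mathbf 1_A,\mathbf 1_B)$ and condition on the first coordinate. Because $B\subseteq D_2$, one has $E[\mathbf 1_B\mid X_1]\le P(D_2\mid X_1)\le\varepsilon$ a.s., so by the conditional Cauchy--Schwarz inequality the part of the covariance carried by $\sigma(X_1)$ is at most a multiple of $\sqrt{\varepsilon}\,\sqrt{P(A)P(B)}$; more precisely, writing $A_1\in\sigma(X_1)$ for the ``profile'' of $A$ obtained by setting $X_3=X_5=\cdots=0$, the estimate $P(A_1)\le P(A)/(1-\varepsilon)$ (which follows from $P(A_1\cap D_2)\le\varepsilon P(A_1)$) lets one bound $\mathrm{Cov}(\mathbf 1_{A_1},\mathbf 1_B)$ cleanly. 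The correction $\mathbf 1_A-\mathbf 1_{A_1}$ is supported on $D_3$, and on the sub-sequence $X_3,X_4,X_5,\dots$ (which again satisfies both hypotheses, with the same $\varepsilon$, since conditional expectation onto a smaller field preserves the bound $\ge 1-\varepsilon$) it presents a scaled copy of the original problem.

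The main obstacle is precisely this residual ``deep'' interaction, in which $A$ and $B$ both effectively live on the rare event $D_2$ and their correlation is transmitted through the higher-index coordinates. One cannot dispose of it by conditioning on all of $\cA$ at once: since $X_3\neq0$ forces $X_2\neq0$ (that is, $D_3\subseteq D_2$ and $D_3\in\cA$), one has $P(D_2\mid\cA)=1$ on $D_3$, so the desired dilution $P(D_2\mid\cA)\le\varepsilon$ fails there. What survives is an ``interior-coordinate'' estimate: for $W\in\cA$ with $W\subseteq\{X_3=0\}$ one gets $P(D_2\cap W)\le \tfrac{\varepsilon}{1-\varepsilon}P(W)$, by reducing $W$ to a $\sigma(X_1)$-event (the higher odd coordinates vanish on $\{X_3=0\}$) and combining $P(X_2\neq0\mid X_1)\le\varepsilon$ with $P(X_2=0\mid X_1)\ge1-\varepsilon$. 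I would therefore split $B$ along the $\cB$-measurable event $D_4$: on $B\cap\{X_4=0\}$ the interior estimate controls the $D_3$-part and yields a clean $O(\sqrt\varepsilon)$ bound, while $B\cap D_4\subseteq D_4$ is handled by the same argument one layer deeper. Iterating, the geometric decay $P(D_n)\le\varepsilon^{\,n-1}$ makes the successive layers summable, and bookkeeping the constants (using $\varepsilon\le1/9$) delivers the bound $3\sqrt\varepsilon$. The hardest part to get right is exactly this induction on depth: ensuring that each layer contributes only $O(\sqrt\varepsilon)$ times a geometrically smaller weight rather than accumulating, which is where the forcing relation $D_n\subseteq D_{n-1}$ must be used in a genuinely quantitative way.
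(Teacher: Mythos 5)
First, a point of reference: the paper does not prove Lemma~\ref{lm2.3} at all --- it cites \cite{1} (Lemma 3.1) and \cite{3} (Theorem 5.21), where the result is established by a fairly delicate induction. So your sketch is being measured against that external argument. The parts you do carry out are correct: the nesting $D_n\subseteq D_{n-1}$ and the bound $P(D_n)\le\varepsilon\,P(D_{n-1})\le\varepsilon^{n-1}$; the dichotomy $B\subseteq D_2$ versus $\{X_2=0\}\subseteq B$ and the disposal of the second case; and the conditional Cauchy--Schwarz bound $|\mathrm{Cov}(\mathbf{1}_{A_1},\mathbf{1}_B)|\le\sqrt{\varepsilon P(A_1)P(B)}$, which follows from $0\le P(B\mid X_1)\le\varepsilon$ a.s.\ and hence $E[P(B\mid X_1)^2]\le\varepsilon P(B)$, together with $P(A_1)\le P(A)/(1-\varepsilon)$.

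The genuine gap is exactly the step you flag as ``the hardest part'': the residual term $\mathrm{Cov}(\mathbf{1}_A-\mathbf{1}_{A_1},\mathbf{1}_B)$, supported on $D_3$. The target bound is $3\sqrt{\varepsilon P(A)P(B)}$, and $\sqrt{P(A)P(B)}$ can be arbitrarily small compared with any fixed power of $\varepsilon$ (nothing in the hypotheses prevents $P(D_2)$, hence $P(B)$, from being, say, $\varepsilon^{1000}$). So the absolute geometric decay $P(D_n)\le\varepsilon^{n-1}$ --- which is what ``the successive layers are summable'' appeals to --- cannot close the argument: it gives only $|\mathrm{Cov}|=O(\varepsilon^2)$ for the residue, which is not $O(\sqrt{\varepsilon P(A)P(B)})$ when $A$ and $B$ are themselves deep rare events. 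Indeed, for $A=D_3$ and $B=D_4$ one has $A_1=\emptyset$, the residue is the whole covariance, and $\mathrm{Cov}(\mathbf{1}_{D_3},\mathbf{1}_{D_4})\approx P(D_4)$ is genuinely of the same order as $\sqrt{\varepsilon P(D_3)P(D_4)}$; the deep layers contribute at full strength relative to the normalization and must be controlled multiplicatively in terms of $P(A)$ and $P(B)$ simultaneously, e.g.\ via bounds of the form $P(A\cap D_{2j+1}\cap B)\le c_j\sqrt{\varepsilon P(A)P(B)}$ with $\sum_j c_j$ under control. Setting up an inductive hypothesis that produces such bounds and verifying that the constants do not accumulate past $3$ is the actual substance of the proof in \cite{1} and \cite{3}. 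Your ``interior-coordinate estimate'' is a correct ingredient, but the recursion is only gestured at: no inductive statement is formulated and no verification that it closes is given, so the proof is incomplete at its central point.
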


A proof of Lemma \ref{lm2.3} can be found in \cite[Lemma 3.1]{1} or \cite[Theorem 5.21]{3}.  (In Lemma \ref{lm2.3}, the labeling of the ``absorbing state'' as $0$ is just for convenience.)

Next, for any (not necessarily stationary) sequence $X:= (X_k, k\in \Z)$ or $X := (X_k, k\in \overline{\N})$, define the dependence coefficients $\rho^*(n)$, $n\in\N$ by (\ref{eq1.2})--(\ref{eq1.3}). (In the case of index set $\overline{\N}$, the sets $S$ and $T$ are restricted to that set.)

In what follows, if $S$ is a nonempty finite set $\subset \overline{\N}$, $J$ is its cardinality, $X_k$, $k\in S$ are random variables, and (say) $f:\R^J \to \R$ is a Borel function, then the notation $f(X_k, k\in S)$ means $f(X_{k(1)},X_{k(2)},\dots,X_{k(J)})$ where $k(1)<k(2)<\dots<k(J)$ are the elements of $S$ in strictly increasing order.

\begin{lemma}
\label{lm2.4}
For any $a\in (0,1)$ and any $\varepsilon>0$, there exists a positive integer $m=m(a,\varepsilon)$ such that the following holds:

Suppose $\zeta_0$ is a $\{0,1\}$-valued random variable. Suppose $\eta := (\eta_1,\eta_2,\eta_3,\dots)$ is a sequence of independent, identically distributed $\{0,1\}$-valued random variables such that $P(\eta_1=1) = a$, with this sequence $\eta$ being independent of $\zeta_0$. For each $k\in \N$, define the $\{0,1\}$-valued random variable
\begin{equation}
\label{eq2.2}
\zeta_k := \zeta_0 \cdot \prod^k_{i=1} \eta_i.
\end{equation}
Then the random sequence $\zeta := (\zeta_0,\zeta_1,\zeta_2,\dots)$ satisfies
\begin{equation}
\label{eq2.3}
\rho^* (\zeta,m) \le \varepsilon.
\end{equation}
\end{lemma}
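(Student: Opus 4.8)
The plan is to bound the maximal correlation through the coefficient $\lambda$ of \eqref{eq2.1} and the odd/even estimate of Lemma~\ref{lm2.3}, after recasting an arbitrary ``interlaced'' pair of index sets as an odd/even splitting of a suitable derived sequence. The key structural fact is that, by \eqref{eq2.2}, $\zeta$ is a $\{0,1\}$-valued Markov chain with $\zeta_k = \zeta_{k-1}\eta_k$; it is nonincreasing, the state $0$ is absorbing, and for $0\le k<l$ the independence of the $\eta_i$ gives $P(\zeta_l=1\mid\sigma(\zeta_i,i\le k)) = \zeta_k\,a^{\,l-k}\le a^{\,l-k}$. Equivalently $\zeta$ is governed by the single ``death time'' $L:=\min\{i\ge 1:\eta_i=0\}$ (with $L=0$ if $\zeta_0=0$), and for any index set $R$ the field $\sigma(\zeta_k,k\in R)$ is generated by the single variable recording the position of $L$ among the points of $R$.

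By Lemma~\ref{lm2.2} it suffices to produce, for each target $\delta>0$, an $m$ with $\lambda(\sigma(\zeta_k,k\in S),\sigma(\zeta_k,k\in T))\le\delta$ for every pair of nonempty disjoint $S,T\subset\overline{\N}$ with $\dist(S,T)\ge m$; applying this with $\delta=\delta(\varepsilon)$ then yields $\rho^*(\zeta,m)\le\varepsilon$. One may take $S,T$ finite, the general case following by taking suprema over finite subsets. To handle the interlacing I would list $S\cup T$ in increasing order and group it into its maximal runs of consecutive points of a single type, $R_1,R_2,\dots,R_r$. Consecutive runs necessarily have opposite type, so the runs alternate strictly between $S$-runs and $T$-runs, and the cross-type separation forces $\min R_j-\max R_{j-1}\ge m$ for every $j\ge 2$. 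Setting $\tilde X_j:=(\zeta_k,k\in R_j)$, with the all-zero vector designated as the absorbing value ``$0$'' (and padding the finite sequence by the absorbing value to index it by $\N$), the monotonicity of $\zeta$ gives that $\tilde X_{j-1}=0$ forces $\tilde X_j=0$, while $\{\tilde X_j\neq 0\}=\{L>\min R_j\}$ together with the Markov property and the one-step estimate above yields $P(\tilde X_j\neq 0\mid\sigma(\tilde X_1,\dots,\tilde X_{j-1}))\le a^{\min R_j-\max R_{j-1}}\le a^m$. Hence $(\tilde X_1,\tilde X_2,\dots)$ satisfies the hypotheses of Lemma~\ref{lm2.3} with its $\varepsilon$ replaced by $a^m$, provided $m$ is large enough that $a^m\le 1/9$.

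Finally, by the strict alternation of run types, $\sigma(\tilde X_j,\ j\text{ odd})$ and $\sigma(\tilde X_j,\ j\text{ even})$ coincide (in one order or the other) with $\sigma(\zeta_k,k\in S)$ and $\sigma(\zeta_k,k\in T)$, so Lemma~\ref{lm2.3} gives $\lambda(\sigma(\zeta_k,k\in S),\sigma(\zeta_k,k\in T))\le 3a^{m/2}$, uniformly in $S,T$. Choosing $m=m(a,\varepsilon)$ so that in addition $3a^{m/2}\le\delta(\varepsilon)$ then completes the argument, and this choice of $m$ depends only on $a$ and $\varepsilon$.

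I expect the main obstacle to be exactly the grouping step: recognizing that an arbitrary interlaced configuration can be absorbed into an odd/even splitting of the run-sequence, and verifying that the derived sequence $(\tilde X_j)$ inherits an absorbing state whose one-step survival probability is at most $a^m$. The monotonicity of $\zeta$ is what makes this work, since it allows the (possibly large and closely spaced) within-run fine structure to be carried harmlessly inside a single $\tilde X_j$, so that the only surviving ``gaps'' between consecutive derived observations are the cross-type gaps of size $\ge m$. Once this reduction is in place, the numerical bound on $\lambda$ is supplied entirely by Lemma~\ref{lm2.3} and the passage back to $\rho$ by Lemma~\ref{lm2.2}.
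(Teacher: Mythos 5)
Your proposal is correct and follows essentially the same route as the paper: the same run decomposition of $S\cup T$ into alternating maximal blocks separated by gaps of length at least $m$, the same encoding of each block as a single variable with the all-zero vector as absorbing state, and the same chain Lemma~\ref{lm2.3} $\rightarrow$ bound on $\lambda$ $\rightarrow$ Lemma~\ref{lm2.2}, with the choice of $m$ governed by $a^m\le\gamma$, $3\gamma^{1/2}\le\delta(\varepsilon)$. The only cosmetic difference is that you derive the survival estimate from the conditional identity $P(\zeta_l=1\mid\sigma(\zeta_i,i\le k))=\zeta_k a^{l-k}$ rather than the paper's inclusion $\{X_i=0\}\supset\bigcup_{u=j+1}^{j+m}\{\eta_u=0\}$; both give the same bound.
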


\begin{proof}
Suppose $a\in (0,1)$ and $\varepsilon >0$. Our first task is to define the positive integer $m = m(a,\varepsilon)$.

Referring to (\ref{eq2.1}), let $\delta = \delta(\varepsilon)>0$ be as in Lemma \ref{lm2.2}. Let $\gamma \in (0,1/9]$ be such that
\begin{equation}
\label{eq2.4}
3 \gamma^{1/2} \le \delta.
\end{equation}
Note that $\delta$ and (hence) $\gamma$ depend only on $\varepsilon$. Let $m = m(a,\varepsilon)$ be a positive integer such that
\begin{equation}
\label{eq2.5}
a^m \le \gamma.
\end{equation}
That completes the definition of $m=m(a,\varepsilon)$.

Now suppose the random variable $\zeta_0$, the random sequence $\eta$, and (then) the random sequence $\zeta$ are as in the statement of Lemma \ref{lm2.4}. Our task is to prove (\ref{eq2.3}).

Suppose $S$ and $T$ are any two nonempty, disjoint subsets of $\overline{\N}$ such that $\dist(S,T)$ $\ge m$. To complete the proof of (\ref{eq2.3}), it suffices to show that
\begin{equation}
\label{eq2.6}
\rho(\sigma(\zeta_k,k\in S), \sigma(\zeta_k, k\in T)) \le \varepsilon.
\end{equation}
By a standard measure-theoretic argument, it suffices to show (\ref{eq2.6}) in the case where both index sets $S$ and $T$ are finite. We make that assumption.

Just for convenience, without loss of generality (after switching $S$ and $T$ if necessary, and after enlarging $T$ by one element if necessary), we assume that the least and greatest elements of the set $S\cup T$ belong to $S$ and $T$ respectively. Then there exists a positive even  integer $L$ and nonempty, (pairwise) disjoint sets $Q_1,Q_2,\dots,Q_L \subset \overline{\N}$ with the following properties:
\begin{eqnarray}
\label{eq2.7}
&&S= \bigcup_{i\in \{1,3,5,\dots,L-1\}} Q_i; \nonumber\\
&&T = \bigcup_{i\in\{2,4,6,\dots,L\}} Q_i;\,\,\mbox{and} \nonumber\\
&&\forall\,\, i\in \{1,2,\dots,L-1\},\,\,\,m+[\max Q_i]\le [\min Q_{i+1}].
\end{eqnarray}

For each positive integer $J$, let $\phi_J: \{0,1\}^J \to \overline \N$ be a one-to-one function such that $\phi_J(0,0,\dots,0) =0$. For each $i\in \{1,2,\dots,L\}$, define the ($\N$-valued) random variable $X_i = \phi_{J(i)}(\zeta_k,k\in Q_i)$ where $J(i)$ is the cardinality of $Q_i$. Then
\begin{eqnarray}
\label{eq2.8}
&&\forall\,\, i\in \{1,2,\dots,L\}, \nonumber\\
&&\qquad  \sigma(X_i) = \sigma(\zeta_k, k\in Q_i)\,\,\,\mbox{and} \,\,\,\{X_i =0\} = \{\zeta_k =0 \,\, \forall\, k\in Q_i\};
\end{eqnarray}
and (hence)
\begin{eqnarray}
\label{eq2.9}
\sigma(\zeta_k,k\in S) &=&\sigma(X_1,X_3,X_5,\dots,X_{L-1})\quad\mbox{and}\nonumber\\
\sigma(\zeta_k,k\in T) &=& \sigma(X_2,X_4,X_6,\dots,X_L).
\end{eqnarray}

For each $k\in \N$, by (\ref{eq2.2}) and the assumptions in Lemma \ref{lm2.4}, one has that (i)~$\zeta_k = \zeta_{k-1} \cdot\eta_k$ and hence $\{\zeta_{k-1} =0\}\subset \{\zeta_k=0\}$, and (ii)~the $\sigma$-fields $\sigma(\eta_i, i\ge k)$ and $\sigma(\zeta_i, i\le k-1)$ are independent. These facts have the following two consequences:

First, by (\ref{eq2.7}) and (\ref{eq2.8}), for each $i\in \{2,3,\dots,L\}$, $\{X_{i-1} =0\} \subset \{X_i =0\}$ and hence $P(X_i =0 \mid X_{i-1} =0) =1$.

Second, for each $i\in \{2,3,\dots,L\}$, letting $j:= \max Q_{i-1}$, one has by (\ref{eq2.2}), (\ref{eq2.7}), and (\ref{eq2.8}) that $\{X_i =0\} \supset \bigcup^{j+m}_{u=j+1} \{\eta_u =0\}$, this latter event is independent of $\sigma(\zeta_k, k\le j)$ and hence independent of $\sigma(X_1,X_2,\dots,X_{i-1})$, and hence now by (\ref{eq2.5}), almost surely
\begin{eqnarray*}
P\left(X_i =0\mid \sigma(X_1,X_2,\dots,X_{i-1})\right) &\ge& P\left( \bigcup^{j+m}_{u=j+1}\{\eta_u =0\}\,\,\bigg|\,\, \sigma(X_1,X_2,\dots,X_{i-1})\right) \\
&=&P\left(\bigcup^{j+m}_{u=j+1} \{\eta_u =0\}\right) \\
&=& 1-P\left(\bigcap^{j+m}_{u=j+1} \{\eta_u=1\}\right) \\
&=&1 - a^m \ge 1-\gamma.
\end{eqnarray*}

It now follows from (\ref{eq2.9}), Lemma \ref{lm2.3}, and (\ref{eq2.4}) that
\begin{eqnarray*}
&&\lambda (\sigma(\zeta_k,k\in S),\sigma(\zeta_k,k\in T)) = \\
&&\quad =\lambda(\sigma(X_1,X_3,X_5,\dots,X_{L-1}),\sigma(X_2,X_4,X_6,\dots,X_L)) \le 3\gamma^{1/2} \le \delta.
\end{eqnarray*}
Hence by the definition of $\delta$ (just before (\ref{eq2.4}), and based on Lemma \ref{lm2.2}), (\ref{eq2.6}) holds. That completes the proof.
\end{proof}

Note that by adapting the proof of Lemma \ref{lm2.4}, one can extend Lemma \ref{lm2.4} to the broader class of random sequences in the hypothesis of Lemma \ref{lm2.3}, with the $\varepsilon \le 1/9$ there replaced by $a\in (0,1)$. However, Lemma \ref{lm2.4} in its present form will suffice for our purposes here.

This section will conclude with a lemma giving just a few related standard elementary facts which will be used later on. Here and below, for a given $a\in (0,1)$, the ``binomial distribution with parameters $0$ and $a$'' is of course the point mass at~$0$.

\begin{lemma}\label{lm2.5}
Suppose $a\in (0,1)$. Suppose $\lambda_1,\lambda_2,\lambda_3\dots$ is a sequence of positive numbers such that $\sum^\infty_{i=1} \lambda_i <\infty$. Suppose $(Y_1,Z_1),(Y_2,Z_2),(Y_3,Z_3),\dots$ is a sequence of independent random vectors such that for each $i\in \N$, (i)~the distribution of $Y_i$ is Poisson with mean $\lambda_i$, and (ii)~for each $y\in \overline\N$, the conditional distribution of $Z_i$ given $\{Y_i=y\}$ is binomial with parameters $y$ and~$a$.

(A) Then $Y:= \sum^\infty_{i=1} Y_i <\infty$ a.s., and this random variable $Y$ has the Poisson distribution with mean $\sum^\infty_{i=1} \lambda_i$.

(B) Also, $Z := \sum^\infty_{i=1} Z_i \le Y< \infty$ a.s.  Further, for any $y\in \overline\N$, the conditional distribution of $Z$ given $\{Y=y\}$ is binomial with parameters $y$ and~$a$.

(C) The ordered triplet of $\sigma$-fields $(\sigma(Y_i,i\in \N), \sigma(Y),\sigma(Z))$ is a Markov triplet.
\end{lemma}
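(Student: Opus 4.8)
The plan is to treat the three parts in order, with the Poisson ``thinning/splitting'' structure serving as the main tool, and to reduce part (C) to the conditional-law computation established in part~(B).

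For part (A), I would separate finiteness from the distributional claim. Since each $Y_i \ge 0$, monotone convergence gives $E Y = \sum_{i=1}^\infty E Y_i = \sum_{i=1}^\infty \lambda_i < \infty$, so $Y < \infty$ a.s. For the law, I would use probability generating functions: for each partial sum, independence gives $E s^{\sum_{i=1}^n Y_i} = \exp\bigl((\sum_{i=1}^n \lambda_i)(s-1)\bigr)$ for $s\in[0,1]$, and letting $n\to\infty$ (with $\sum_{i=1}^n Y_i \uparrow Y$ and bounded convergence) identifies the generating function of $Y$ as that of the Poisson law with mean $\sum_i \lambda_i$.

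The substance of the lemma is part (B), where I would exploit Poisson splitting. For each $i$ set $W_i := Y_i - Z_i$; the standard splitting fact says $Z_i$ and $W_i$ are independent with $Z_i$ being Poisson$(a\lambda_i)$ and $W_i$ being Poisson$((1-a)\lambda_i)$. Because the pairs $(Y_i,Z_i)$, and hence the pairs $(Z_i,W_i)$, are independent across $i$, while $Z_i$ and $W_i$ are independent within each $i$, the whole collection $\{Z_i\}_i\cup\{W_i\}_i$ is mutually independent; in particular the families $\{Z_i\}_i$ and $\{W_i\}_i$ are independent. Applying part~(A) to each family gives $Z=\sum_i Z_i$ Poisson$(a\Lambda)$ and $W:=\sum_i W_i = Y-Z$ Poisson$((1-a)\Lambda)$, with $\Lambda:=\sum_i\lambda_i$, and $Z$ independent of $W$; this also yields $Z\le Y<\infty$ a.s. A direct evaluation of $P(Z=z\mid Y=y)=P(Z=z)P(W=y-z)/P(Y=y)$ for independent Poisson $Z,W$ then collapses to $\binom{y}{z}a^z(1-a)^{y-z}$, which is the asserted binomial$(y,a)$ conditional law.

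For part (C) I would verify condition (i) of Definition \ref{df1.1} for the triplet $(\sigma(Y_i,i\in\N),\sigma(Y),\sigma(Z))$. Since $Y=\sum_i Y_i$ is $\sigma(Y_i,i\in\N)$-measurable, one has $\sigma(Y)\subset\sigma(Y_i,i\in\N)$, so the requirement reduces to $P(Z=z\mid\sigma(Y_i,i\in\N))=P(Z=z\mid\sigma(Y))$ a.s.\ for each $z\in\overline\N$. Given $(Y_i=y_i)_i$ (where $\sum_i y_i = Y<\infty$, so all but finitely many $y_i$ vanish), $Z$ is a sum of independent binomial$(y_i,a)$ variables, hence binomial$(\sum_i y_i,a)=$ binomial$(Y,a)$; thus $P(Z=z\mid\sigma(Y_i,i\in\N))=\binom{Y}{z}a^z(1-a)^{Y-z}$, which is $\sigma(Y)$-measurable and, by part~(B), equals $P(Z=z\mid\sigma(Y))$. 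As the events $\{Z=z\}$ generate $\sigma(Z)$, the Markov triplet property follows. I expect no deep obstacle here, since all three parts are standard; the one point requiring a little care is the passage to infinite sums in part~(B)---namely justifying that componentwise independence of $Z_i$ and $W_i$ together with independence across $i$ yields independence of the two infinite families, and hence of $Z$ and $W$---which is precisely what legitimizes the clean conditional computation.
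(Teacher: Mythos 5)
Your proof is correct. The paper handles (B) and (C) simultaneously with a single observation: conditional on an atom $\bigcap_{i}\{Y_i=y_i\}$ of $\sigma(Y_i,i\in\N)$ (an event of positive probability because $\sum_i\lambda_i<\infty$), the $Z_i$ are independent binomials with common success probability $a$, so $Z$ is binomial $(\sum_i y_i,a)$; this is precisely your argument for part (C), and the paper then gets (B) for free by averaging over the atoms contained in $\{Y=y\}$ (a mixture of binomial $(y,a)$ laws is binomial $(y,a)$), i.e.\ by the tower property applied to the very identity you establish in (C). Where you genuinely diverge is part (B), which you prove instead by Poisson thinning: $Z_i$ and $W_i:=Y_i-Z_i$ are independent Poissons, the whole family $\{Z_i\}\cup\{W_i\}$ is mutually independent, and hence $Z$ and $W=Y-Z$ are independent Poisson $(a\Lambda)$ and Poisson $((1-a)\Lambda)$ with $\Lambda:=\sum_i\lambda_i$, after which the binomial conditional law falls out of the standard two-independent-Poissons computation. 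That route is slightly longer in context (and leaves you re-deriving in (C) a fact from which (B) would have followed anyway), but it buys strictly more than the paper records: the independence of $Z$ and $Y-Z$ and the identification of the marginal law of $Z$ as Poisson $(a\Lambda)$. Both treatments of (A) amount to the same standard limiting argument.
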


Statement (A) holds by a simple limiting argument. Statements (B) and (C) both follow from the elementary fact that if $m$ is a nonnegative integer and $(y_1,y_2,y_3,\dots)$ is a sequence of nonnegative integers whose sum is $m$ (which allows at most finitely many $y_i$'s to be nonzero), then the event $\bigcap^\infty_{i=1} \{Y_i = y_i\}$ has positive probability and is an atom of the $\sigma$-field $\sigma(Y_1,Y_2,Y_3,\dots)$, and the conditional distribution of $Z$ given that event is binomial with parameters $m$ and~$a$.

\section{Two Markov Chains}\label{sc3}
In this section, in preparation for the main argument for Theorem \ref{th1.3} to be given in Section \ref{sc4}, the property of $\rho^*$-mixing will be verified for two classes of (nonstationary) Markov chains.

\begin{lemma}
\label{lm3.1}
Suppose $a\in (0,1)$, $p\in(0,1)$, and $N\in \N$. Suppose $Y:= (Y_0$, $Y_1$, $Y_2$, $\dots)$ is a Markov chain whose states are nonnegative integers, such that (i)~the distribution of $Y_0$ is binomial $(N,p)$, and (ii)~for each $j\in \overline\N$ and each integer $y$ such that $P(Y_j =y)>0$, the conditional distribution of $Y_{j+1}$ given $\{Y_j = y\}$ is binomial $(y,a)$.

Suppose $\varepsilon >0$, and the positive integer $m=m(a,\varepsilon)$ is as in Lemma \ref{lm2.4}. Then
\begin{equation}
\label{eq3.1}
\rho^*(Y,m) \le \varepsilon.
\end{equation}
\end{lemma}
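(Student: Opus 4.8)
The plan is to realize the Markov chain $Y$ as a superposition of $N$ independent ``particle'' processes, each of exactly the type treated in Lemma \ref{lm2.4}, and then glue the estimates together with the Cs\'aki--Fisher lemma (Lemma \ref{lm2.1}). Since the quantity $\rho^*(Y,m)$ depends only on the joint distribution of the coordinates of $Y$, I may work with any version of $Y$ having the correct law. So first I would construct, on a suitable probability space, independent random elements $\zeta^{(1)},\zeta^{(2)},\dots,\zeta^{(N)}$, where each $\zeta^{(i)} = (\zeta^{(i)}_0,\zeta^{(i)}_1,\zeta^{(i)}_2,\dots)$ is built as in Lemma \ref{lm2.4}: $\zeta^{(i)}_0$ is Bernoulli$(p)$, the $\eta^{(i)}_l$ are i.i.d.\ Bernoulli$(a)$ and independent of $\zeta^{(i)}_0$, and $\zeta^{(i)}_k = \zeta^{(i)}_0\cdot\prod_{l=1}^k \eta^{(i)}_l$. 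I take the $N$ triples $(\zeta^{(i)}_0, \eta^{(i)}_\cdot)$ to be mutually independent. I then set $Y'_j := \sum_{i=1}^N \zeta^{(i)}_j$ for each $j\in\overline{\N}$.

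The next step is to check that $Y' := (Y'_0,Y'_1,\dots)$ has the same law as $Y$. Since $Y'_0$ is a sum of $N$ i.i.d.\ Bernoulli$(p)$ variables, it is binomial$(N,p)$. For the transition, note $\zeta^{(i)}_{j+1} = \zeta^{(i)}_j\cdot\eta^{(i)}_{j+1}$, so each ``live'' particle (one with $\zeta^{(i)}_j=1$) survives to time $j+1$ independently with probability $a$; by the independence of the $\eta^{(i)}_{j+1}$ from the past and the symmetry of the particles, the conditional distribution of $Y'_{j+1}$ given the whole history through time $j$ depends only on the number $Y'_j$ of live particles and equals binomial$(Y'_j,a)$. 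Hence $Y'$ is a Markov chain matching the two defining properties of $Y$, so $\rho^*(Y,m) = \rho^*(Y',m)$, and it suffices to bound the latter. I would also record here the key inclusions: for any set $R\subset\overline{\N}$, each $Y'_k$ with $k\in R$ is a function of $(\zeta^{(i)}_k, i=1,\dots,N)$, so $\sigma(Y'_k,k\in R)\subset \bigvee_{i=1}^N \sigma(\zeta^{(i)}_k,k\in R)$.

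Now fix nonempty disjoint $S,T\subset\overline{\N}$ with $\dist(S,T)\ge m$, and set $\cA_i := \sigma(\zeta^{(i)}_k,k\in S)$ and $\cB_i := \sigma(\zeta^{(i)}_k,k\in T)$ for $i=1,\dots,N$. Then $\cA_i\vee\cB_i = \sigma(\zeta^{(i)}_k, k\in S\cup T)$, and these $\sigma$-fields are independent across $i$ because the processes $\zeta^{(i)}$ are independent. Applying Lemma \ref{lm2.1} (using the finite index set $\{1,\dots,N\}$, equivalently padding with trivial $\sigma$-fields for indices beyond $N$) yields
\[
\rho\Bigl(\bigvee_{i=1}^N \cA_i,\ \bigvee_{i=1}^N \cB_i\Bigr) = \max_{1\le i\le N} \rho(\cA_i,\cB_i).
\]
Each term on the right is $\rho(\sigma(\zeta^{(i)}_k,k\in S),\sigma(\zeta^{(i)}_k,k\in T)) \le \rho^*(\zeta^{(i)},m) \le \varepsilon$ by Lemma \ref{lm2.4}, since $\dist(S,T)\ge m$ and each $\zeta^{(i)}$ meets the hypotheses of that lemma. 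Combining this with the inclusions from the previous paragraph and the monotonicity of $\rho$ in each of its two $\sigma$-field arguments gives $\rho(\sigma(Y'_k,k\in S),\sigma(Y'_k,k\in T))\le\varepsilon$. Taking the supremum over all admissible $S,T$ establishes $\rho^*(Y',m)\le\varepsilon$, hence \eqref{eq3.1}.

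I expect the only genuinely delicate point to be the construction and verification step: one must confirm that the independent-particle coupling $Y'$ really is a Markov chain with the prescribed binomial$(N,p)$ initial law and binomial thinning transitions (the standard ``thinning via independent survival'' representation), and that the particle processes $\zeta^{(i)}$ are mutually independent so that the independence hypothesis of Lemma \ref{lm2.1} is met. Once that coupling is in place, the remainder is a routine assembly of Lemmas \ref{lm2.1} and \ref{lm2.4} together with monotonicity of maximal correlation.
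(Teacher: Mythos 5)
Your proposal is correct and follows essentially the same route as the paper: realize $Y$ as the sum of $N$ independent $\{0,1\}$-valued ``particle'' processes of the form in Lemma \ref{lm2.4}, verify that this superposition has the prescribed binomial $(N,p)$ initial law and binomial thinning transitions, and then combine the per-particle bounds from Lemma \ref{lm2.4} via the Cs\'aki--Fisher lemma (Lemma \ref{lm2.1}). The paper's proof is the same argument, merely spending more space on the verification that the coupled chain has the right transition probabilities.
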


\begin{proof}
By a standard measure-theoretic argument, the dependence coefficients
\newline$\rho^*(\cdot,n)$, $n\in\N$ for a given random sequence depend only on the distribution of that whole random sequence. Also, the distribution of a (say discrete-state) Markov chain $Y:= (Y_0$, $Y_1$, $Y_2$ ,$\dots)$ is uniquely determined by the marginal distribution of $Y_0$ and the one-step transition probabilities. Hence it suffices to carry out the proof of Lemma \ref{lm3.1} for a Markov chain $Y$ that satisfies the conditions in Lemma \ref{lm3.1} and is embedded in a convenient context.

Refer to the parameters $a$, $p$, and $N$ in the statement of Lemma \ref{lm3.1}. Let $\eta := (\eta_{h,j}$, $1\le h\le N$, $j\in\N)$ be an array of independent, identically distributed $\{0,1\}$-valued random variables such that $\rho(\eta_{1,1} =1) = a$.

Let $\zeta := (\zeta_{h,j}$, $1\le h\le N$, $j\in \overline\N)$ be an array of $\{0,1\}$-valued random variables that meets the following two conditions (interpreted appropriately if $N=1$): (i)~The random variables $\zeta_{h,0}$, $1\le h\le N$ are independent, identically distributed $\{0,1\}$-valued random variables such that $P(\zeta_{1,0}=1) = p$, with the sequence $(\zeta_{h,0}$, $1\le h\le N)$ being independent of the array $\eta$. (ii)~For each $h\in \{1,2,\dots,N\}$ and each $j\in \N$,
\begin{equation}\label{eq3.2}
\zeta_{h,j} := \zeta_{h,0} \cdot \prod^j_{i=1} \eta_{h,i}.
\end{equation}

Define the sequence $Y:= (Y_0,Y_1,Y_2,\dots)$ of (nonnegative, integer-valued) random variables as follows: For each $j\in\N$,
\begin{equation}
\label{eq3.3}
Y_j := \sum^N_{h=1} \zeta_{h,j}.
\end{equation}

By (\ref{eq3.2}), for every $h\in \{1,2,\dots,N\}$ and every $j\in \overline\N$,
\begin{equation}
\label{eq3.4}
\zeta_{h,j+1} = \zeta_{h,j} \cdot \eta_{h,j+1}.
\end{equation}
By (\ref{eq3.3}) and (\ref{eq3.4}),
\begin{equation}
\label{eq3.5}
N \ge Y_0 \ge Y_1 \ge Y_2 \ge \dots \ge 0.
\end{equation}
By (\ref{eq3.3}) and the properties of the array $\zeta$,
\begin{equation}
\label{eq3.6}
\mbox{the distribution of $Y_0$ is binomial $(N,p)$.}
\end{equation}
Our next task, starting with (\ref{eq3.6}), is to establish the distribution of the entire sequence~$Y$.

Define (with some redundancy) the $\sigma$-fields $\cG_j$, $j\in \overline\N$ as follows:
\begin{eqnarray}
\label{eq3.7}
\!\!\!&&\cG_0 := \sigma(\zeta_{h,0}, 1\le h\le N); \,\,\mbox{and} \nonumber\\
\!\!\!&&\forall\,\, j\in \N,\,\, \cG_j := \sigma(\zeta_{h,k}, 1\le h\le N, 0\le k\le j) \vee \sigma(\eta_{h,k}, 1\le h\le N, 1\le k\le j). \nonumber\\
&&
\end{eqnarray}
For each $j\in \overline\N$, the $\sigma$-field $\cG_j$ is independent of $\sigma(\eta_{h,k}$, $1\le h\le N$, $k\ge j+1)$.

Now suppose $j\in \overline\N$; and suppose $y\in \{1,2,\dots,N\}$, and $S\subset \{1,2,\dots,N\}$ is a set with cardinality $y$.  Define the event
\begin{equation}
\label{eq3.8}
A := \{\forall\, h\in S,\, \zeta_{h,j} =1;\,\,\mbox{and}\,\,\forall\, h\in \{1,\dots,N\} -S,\, \zeta_{h,j} =0\}.
\end{equation}
(If $y=N$ then $A = \bigcap^N_{h=1}\{\zeta_{h,j} =1\}$.) Suppose $G\in \cG_j$ (see (\ref{eq3.7})) is an event, and that $P(G\cap A)>0$. Then $Y_{j+1} = \sum^N_{h=1} \zeta_{h,j} \cdot \eta_{h,j+1}$ by (\ref{eq3.3}) and (\ref{eq3.4}); and hence by the sentence after (\ref{eq3.7}) and a simple argument, for every $z \in \{0,1,\dots,y\}$,
\begin{equation}
\label{eq3.9}
P(Y_{j+1} = z\mid G\cap A) = {y \choose z} a^z(1-a)^{y-z}.
\end{equation}

Next suppose again that $j\in \overline\N$ and $y\in \{1,2,\dots,N\}$. By (\ref{eq3.3}), the event $\{Y_j =y\}$ is the union of finitely many (pairwise) disjoint events of the form $A$ in (\ref{eq3.8}). Hence by (\ref{eq3.9}) and a simple calculation, if $G\in \cG_j$, $P(G\cap \{Y_j =y\}) >0$, and $z\in \{0,1,\dots,y\}$, then
\begin{equation}
\label{eq3.10}
P(Y_{j+1} = z \mid G\cap \{Y_j =y\}) = {y \choose z} a^z (1-a)^{y-z}.
\end{equation}
Of course (recall (\ref{eq3.5})) eq.\ (\ref{eq3.10}) also holds for $y=0$ (and $z=0$). Also, by (\ref{eq3.3}) and (\ref{eq3.7}), each of the random variables $Y_k$, $0\le k\le j$ is $\cG_j$-measurable. Hence (\ref{eq3.10}) has the following consequences:

The sequence $Y$ is a Markov chain. For every $j\in \overline\N$ and every $y\in \{0,1,\dots,N\}$, $P(Y_j =y)>0$ (by (\ref{eq3.6}) followed by (\ref{eq3.10}) and induction, with $G = \Omega$). Finally, for each $j\in \overline\N$ and each $y\in \{0,1,\dots,N\}$, the conditional distribution of $Y_{j+1}$ given $\{Y_j =y\}$ is binomial $(y,a)$. Hence by (\ref{eq3.6}), the sequence $Y$ meets all conditions specified in Lemma \ref{lm3.1}.

Now suppose $\varepsilon >0$, and $m=m(a,\varepsilon)$ is as in Lemma \ref{lm2.4}. To complete the proof of Lemma \ref{lm3.1}, it suffices to prove for the sequence $Y$ above that (\ref{eq3.1}) holds.

For each $h\in \{1,2,\dots,N\}$, define the random sequence $\zeta^{(h)} := (\zeta_{h,0}$, $\zeta_{h,1}$, $\zeta_{h,2}$,$\dots)$. By (\ref{eq3.2}) and the properties of the arrays $\eta$ and $\zeta$ here, for each $h\in \{1,2,\dots,N\}$, the sequence $\zeta^{(h)}$ fulfills the conditions in Lemma \ref{lm2.4}. Hence from Lemma \ref{lm2.4},
\begin{equation}
\label{eq3.11}
\forall\,\, h\in \{1,\dots,N\},\,\,\, \rho^*(\zeta^{(h)},m) \le \varepsilon.
\end{equation}
Also, by (\ref{eq3.2}) and the properties of the arrays $\eta$ and $\zeta$ here, the sequences $\zeta^{(h)}$, $h\in\{1,2,\dots,N\}$ are independent of each other. Hence by (\ref{eq3.3}), (\ref{eq3.11}), and Lemma \ref{lm2.1}, eq.\ (\ref{eq3.1}) holds. That completes the proof.
\end{proof}

\begin{lemma}\label{lm3.2}
Suppose $a\in (0,1)$ and $\lambda >0$. Suppose $Y := (Y_0,Y_1,Y_2,\dots)$ is a Markov chain with state space $\overline\N$, such that (i)~the distribution of the random variable $Y_0$ is Poisson $(\lambda)$, and (ii)~for each $j\in\overline\N$ and each $y\in\overline\N$, the conditional distribution of $Y_{j+1}$ given $\{Y_j =y\}$ is binomial $(y,a)$.

(A) For each $j\in\overline\N$, the distribution of the random variable $Y_j$ is Poisson $(\lambda a^j)$.

(B) Suppose $\varepsilon >0$, and suppose the positive integer $m=m(a,\varepsilon)$ is as in Lemma \ref{lm2.4}. Then $\rho^*(Y,m) \le \varepsilon$.
\end{lemma}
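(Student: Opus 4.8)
The plan is to reduce Lemma \ref{lm3.2} to Lemma \ref{lm3.1} by a Poissonization/mixing argument, using the fact that a Poisson$(\lambda)$ random variable can be realized as a limit of binomial$(N,p)$ distributions with $Np\to\lambda$. For part (A), I would simply invoke the well-known fact (compare the computation implicit in Lemma \ref{lm2.5}(B)) that thinning a Poisson$(\mu)$ random variable by independent binomial$(\cdot,a)$ sampling yields a Poisson$(\mu a)$ random variable; applying this inductively with $\mu = \lambda a^j$ gives that $Y_j$ is Poisson$(\lambda a^j)$ for each $j\in\overline\N$. This part is routine.

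For part (B), the substantive claim, the clean approach is to construct the chain $Y$ concretely in a form that exposes it as a countable ``sum'' of independent absorbing $\{0,1\}$-valued sequences, exactly as in the proof of Lemma \ref{lm3.1}, but now with a Poisson rather than binomial number of summands. Concretely, I would let $\zeta^{(h)} := (\zeta_{h,0}, \zeta_{h,1}, \zeta_{h,2}, \dots)$, $h\in\N$, be a countable collection of independent sequences, each of the form \eqref{eq2.2}: $\zeta_{h,0}$ is a $\{0,1\}$-valued ``seed'' and the later coordinates are obtained by multiplying by i.i.d.\ Bernoulli$(a)$ factors, with all seeds and all factors mutually independent. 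I would choose the seeds $\zeta_{h,0}$ so that the total number of initially ``active'' coordinates, $Y_0 := \sum_{h\in\N}\zeta_{h,0}$, has the Poisson$(\lambda)$ distribution — for instance by taking $P(\zeta_{h,0}=1)=p_h$ with $\sum_h p_h = \lambda$ and $\sum_h p_h^2 < \infty$, so that the sum of independent Bernoullis converges to a Poisson (or, more robustly, by appealing directly to Lemma \ref{lm2.5}, viewing each $Y_j = \sum_h \zeta_{h,j}$ as a thinned Poisson). Setting $Y_j := \sum_{h\in\N}\zeta_{h,j}$ then produces, by the same conditional-distribution computation as in Lemma \ref{lm3.1} (equations \eqref{eq3.8}--\eqref{eq3.10}) together with Lemma \ref{lm2.5}(A)--(B), a Markov chain with the prescribed Poisson$(\lambda)$ initial law and binomial$(y,a)$ transitions. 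Since $\rho^*(\cdot,n)$ depends only on the law of the whole sequence, it suffices to bound $\rho^*$ for this concrete realization.

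The mixing bound itself then falls out of the tools already assembled. Each individual sequence $\zeta^{(h)}$ satisfies the hypotheses of Lemma \ref{lm2.4}, so $\rho^*(\zeta^{(h)},m)\le\varepsilon$ for the integer $m=m(a,\varepsilon)$ fixed there; crucially, this $m$ is the same for every $h$ because it depends only on $a$ and $\varepsilon$, not on the seed probability. The sequences $\zeta^{(h)}$, $h\in\N$, are mutually independent, and $Y$ is a coordinatewise function of them, so Lemma \ref{lm2.1} (the Cs\'aki--Fisher maximal-correlation identity) applied to the countable family yields
\[
\rho^*(Y,m) \le \sup_{h\in\N}\rho^*(\zeta^{(h)},m) \le \varepsilon,
\]
precisely as in the passage from \eqref{eq3.11} to \eqref{eq3.1} in Lemma \ref{lm3.1}.

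The main obstacle I anticipate is purely the bookkeeping needed to pass from a finite array ($1\le h\le N$) to a countable one ($h\in\N$): I must verify that the infinite sums $Y_j=\sum_h\zeta_{h,j}$ converge almost surely and define an honest Poisson$(\lambda a^j)$ chain with the right transitions — this is exactly what Lemma \ref{lm2.5} is designed to deliver (finiteness and the Poisson/thinning laws in (A)--(B), and the Markov-triplet structure in (C)) — and that Lemma \ref{lm2.1} applies to countably many $\sigma$-fields, which it does as stated. Once those measure-theoretic points are in place, the argument is essentially a transcription of the proof of Lemma \ref{lm3.1} with $N$ replaced by $\infty$.
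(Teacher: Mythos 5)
Your part (A) is fine and matches the paper. But part (B) has a genuine gap at its foundation: the concrete realization you propose does not exist. You want independent $\{0,1\}$-valued seeds $\zeta_{h,0}$ with $Y_0=\sum_{h\in\N}\zeta_{h,0}$ \emph{exactly} Poisson $(\lambda)$, suggesting $P(\zeta_{h,0}=1)=p_h$ with $\sum_h p_h=\lambda$. A sum of independent Bernoulli variables is never exactly Poisson: comparing probability generating functions, $e^{\lambda(s-1)}\ne\prod_h\bigl(1+p_h(s-1)\bigr)$ unless all $p_h=0$ (equivalently, by Raikov's theorem, each independent summand of a Poisson variable must itself be Poisson, and a nondegenerate $\{0,1\}$-valued variable is not). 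The choice $\sum_h p_h=\lambda$, $\sum_h p_h^2<\infty$ gives only an approximation in distribution, not equality, so the chain you build is not the chain of Lemma \ref{lm3.2}. Your fallback --- ``appealing directly to Lemma \ref{lm2.5}'' --- does not repair this: Lemma \ref{lm2.5} requires the summands $Y_i$ to be Poisson, and if you make the seeds Poisson the individual sequences $\zeta^{(h)}$ are no longer $\{0,1\}$-valued, so Lemma \ref{lm2.4} no longer applies to them. Thus the key step, writing the Poisson chain as a countable sum of independent sequences each covered by Lemma \ref{lm2.4}, fails.

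The correct repair is essentially the one the paper uses, and it keeps your underlying intuition (the law of small numbers) while avoiding the impossible exact decomposition. The Bernoulli decomposition \emph{is} available when the initial law is binomial $(N,p)$; that is exactly Lemma \ref{lm3.1}, and the resulting bound $\rho^*(Y,m)\le\varepsilon$ is uniform in $N$ and $p$ because $m=m(a,\varepsilon)$ depends only on $a$ and $\varepsilon$. One then approximates the Poisson $(\lambda)$ chain by chains $Y^{(n)}$ with initial law binomial $(n,\lambda/n)$ and the same transition kernel; the finite-dimensional distributions converge as in \eqref{eq3.12}, so for bounded $f,g$ the correlations in \eqref{eq3.14} converge to the one in \eqref{eq3.13}, and the uniform bound $\varepsilon$ survives the limit. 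In short: prove the bound exactly where your construction works, and transfer it to the Poisson case by weak convergence rather than by an almost-sure realization.
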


\begin{proof}
For statement (A), conditions (i) and (ii) in Lemma \ref{lm3.2} imply that $Y_1$ is Poisson $(\lambda a)$ by a standard calculation, and by repeating that argument one obtains (A) by induction.

\textit{Proof of (B).}  For each integer $n>\lambda$, let $Y^{(n)} := (Y^{(n)}_0, Y^{(n)}_1,Y^{(n)}_2,\dots)$ be a Markov chain with state space $\{0,1,\dots,n\}$ such that (i)~the distribution of $Y^{(n)}_0$ is binomial $(n,\lambda/n)$, and (ii)~for each $j\in \overline\N$ and each $y\in \{0,1,\dots,n\}$, the conditional distribution of $Y_{j+1}$ given $\{Y_j = y\}$ is binomial $(y,a)$. Then $Y^{(n)}_0$ converges in distribution to $Y_0$ (which is Poisson $(\lambda))$ as $n\to \infty$. Since the one-step transition probabilities for each of the Markov chains $Y^{(n)}$ are the same as for the Markov chain $Y$, one has that for every $j\in \overline\N$ and every choice of nonnegative integers $y_0,y_1,\dots,y_j$,
\begin{equation}
\label{eq3.12}
P\left( \bigcap^j_{i=0} \left\{Y^{(n)}_i = y_i\right\}\right) \longrightarrow P\left(\bigcap^j_{i=0} \left\{Y_i = y_i\right\}\right)\,\,\,\mbox{as $n\to \infty$.}
\end{equation}

The rest of this argument is routine, but let us go through it.  Suppose $\varepsilon >0$, and suppose $m=m(a,\varepsilon)$ is as in Lemma \ref{lm2.4}. Suppose $S$ and $T$ are nonempty, finite, disjoint subsets of $\overline\N$ such that $\dist(S,T) \ge m$. Suppose $f: \overline{\N}^I \to \R$ and $g:\overline{\N}^J \to \R$ are bounded functions, where $I$ and $J$ are the cardinalities of $S$ and $T$ respectively. To complete the proof, it suffices to show that (see the sentence right before Lemma \ref{lm2.4})
\begin{equation}
\label{eq3.13}
|\mbox{Corr}(f(Y_k, k\in S), g(Y_k,k\in T))| \le \varepsilon.
\end{equation}

Now by Lemma \ref{lm3.1}, for each integer $n>\lambda$,
\begin{equation}
\label{eq3.14}
|\mbox{Corr}(f(Y^{(n)}_k, k\in S), g(Y^{(n)}_k, k\in T))|\le \varepsilon.
\end{equation}
If the left side of (\ref{eq3.13}) is nonzero, then the left side of (\ref{eq3.14}) converges to the left side of (\ref{eq3.13}) as $n\to \infty$ by (\ref{eq3.12}) and a routine calculation.  Hence by (\ref{eq3.14}), eq.\ (\ref{eq3.13}) holds. That completes the proof.
\end{proof}

\section{Proof of Theorem \ref{th1.3}}\label{sc4}
As in the statement of Theorem \ref{th1.3}, suppose $a\in (0,1)$ and $\lambda>0$. The argument here will be divided into four ``steps''.

\textbf{Step 1.} \textit{Construction of the sequence $X$.} For each integer $\ell$ (that is, each $\ell \in \Z$), let $Y^{(\ell)} := (Y^{(\ell)}_0$, $Y^{(\ell)}_1$, $Y^{(\ell)}_2$, $\dots)$ be a Markov chain with state space $\overline\N$, such that the distribution of this Markov chain $Y^{(\ell)}$ (on $\overline{\N}^{\overline\N}$) is the same as that of the Markov chain $Y$ in Lemma \ref{lm3.2}. Let these Markov chains $Y^{(\ell)}$, $\ell\in\Z$ be constructed in such a way that they are independent of each other.

Just for convenient ``bookkeeping'' later on, for each $\ell \in\Z$ and each integer $k\le -1$, define the degenerate random variable $Y^{(\ell)}_k \equiv 0$. For each $\ell\in \Z$, thereby extend the Markov chain $Y^{(\ell)}$ (retaining that notation) to the form $Y^{(\ell)} := (Y^{(\ell)}_k, k\in \Z) = (\dots,0,0,0,Y^{(\ell)}_0,$ $Y^{(\ell)}_1,$ $Y^{(\ell)}_2, \dots)$. These random sequences $Y^{(\ell)}$, $\ell\in \Z$ are each a Markov chain, they are independent of each other, and they all have the same distribution (on, say, $\overline{\N}^\Z$). This extension does not change any of the dependence coefficients $\rho^*(Y^{(\ell)},n)$.

Now for each $\ell\in \Z$ and each $j\in \overline\N$, the distribution of the random variable $Y^{(\ell)}_j$ is Poisson with mean $\lambda a^j$ (see Lemma \ref{lm3.2}(A)). Hence in particular, for each $\ell\in \Z$, $\sum^\infty_{j=0} EY^{(\ell-j)}_j<\infty$, and hence $\sum^\infty_{j=0} Y^{(\ell-j)}_j <\infty$ a.s. Define the sequence $X := (X_k$, $k\in\Z)$ of the random variables as follows: For each $k\in\Z$,
\begin{equation}
\label{eq4.1}
X_k := \sum^\infty_{j=0} Y^{(k-j)}_j = \sum^\infty_{j=-\infty} Y^{(k-j)}_j.
\end{equation}

Since the (nonstationary) Markov chains $Y^{(\ell)}$, $\ell \in\Z$ are independent of each other and have the same distribution, it follows from an elementary (if tedious) measure-theoretic argument that this random sequence $X$ is strictly stationary. (Eq. (\ref{eq4.1}) and the resulting stationarity of $X$ are adapted from a scheme used in \cite{13} to ``convert'' a nonstationary sequence to a stationary one preserving certain properties.)

Note that by (\ref{eq4.1}) and the comments preceding it, one has (as in Lemma \ref{lm2.5}(A)) that for each $k\in\Z$,
\begin{equation}
\label{eq4.2}
\mbox{the distribution of $X_k$ is Poisson $(\lambda/(1-a))$.}
\end{equation}

\textbf{Step 2.} \textit{Verification of some features of the INAR(1) model with Poisson innovations.} For each integer $k$, referring to the comments preceding (\ref{eq4.1}), define the random variables $U_k$ and $V_k$ as follows:
\begin{equation}
\label{eq4.3}
U_k := \sum^\infty_{j=1} Y^{(k-j)}_j \,\,\mbox{and}\,\, V_k := Y^{(k)}_0.
\end{equation}
Then by (\ref{eq4.1}), for each $k\in \Z$,
\begin{equation}
\label{eq4.4}
X_k = U_k + V_k.
\end{equation}
By (\ref{eq4.3}) and the comments preceding (\ref{eq4.1}), one has that for each $k\in \Z$,
\begin{equation}
\label{eq4.5}
\mbox{the distribution of $V_k$ is Poisson $(\lambda)$.}
\end{equation}

By (\ref{eq4.1}) and (\ref{eq4.3}), for each $k\in\Z$,
\begin{equation}
\label{eq4.6}
\sigma(U_k) \subset \sigma(Y^{(\ell)},\ell\le k-1),\, \sigma(V_k) \subset \sigma(Y^{(k)}),\,\,\mbox{and}\,\,\sigma(X_k) \subset \sigma(Y^{(\ell)},\ell\le k).
\end{equation}
Since the Markov chains $Y^{(\ell)}$, $\ell\in\Z$ are independent of each other, one has by (\ref{eq4.6}) that for each $k\in\Z$,
\begin{equation}
\label{eq4.7}
\sigma(V_k)\,\,\mbox{is independent of}\,\,\sigma(U_j,V_j,X_j,\, j\le k-1)\vee \sigma(U_k).
\end{equation}
(Eqs.\ (\ref{eq4.4}), (\ref{eq4.5}), and (\ref{eq4.7}) together have the interpretation that $V_k$ is a ``Poisson innovation''.)

Next, suppose $k\in\Z$. Consider the independent random vectors
\[
\left( Y^{(k-1)}_0, Y^{(k-1)}_1\right), \left(Y^{(k-2)}_1, Y^{(k-2)}_2\right), \left(Y^{(k-3)}_2, Y^{(k-3)}_3\right),\dots\,\,.
\]
By (\ref{eq4.1}) and (\ref{eq4.3}), the first coordinates of these random vectors add up to $X_{k-1}$, and the second coordinates add up to $U_k$. From the conditions in Lemma \ref{lm3.2} (and the comments preceding (\ref{eq4.1})), the hypothesis of Lemma \ref{lm2.5} is fulfilled.

Hence by Lemma \ref{lm2.5}(B), one has that for each $k\in\Z$ and each $x\in\overline\N$,
\begin{eqnarray}
\label{eq4.8}
&&\mbox{the conditional distribution of $U_k$ given $\{X_{k-1} =x\}$}\nonumber\\
&&\mbox{is binomial with parameters $x$ and $a$.}
\end{eqnarray}
Also, from Lemma \ref{lm2.5}(C), one has that for each $k\in\Z$,
\begin{equation}
\label{eq4.9}
\left(\sigma( Y^{(k-1-j)}_j,j\ge 0), \sigma(X_{k-1}),\sigma(U_k)\right) \,\,\mbox{is a Markov triplet.}
\end{equation}

\textbf{Step 3.} \textit{Two Markov triplets.} For each $\ell\in\Z$, define the $\sigma$-field
\begin{equation}
\label{eq4.10}
\cH^{(\ell)}:= \sigma(Y^{(j)}_{\ell-j}, j\in\Z).
\end{equation}
By (\ref{eq4.1}) and (\ref{eq4.3}), for each $\ell\in\Z$,
\begin{equation}
\label{eq4.11}
\sigma(U_\ell,V_\ell,X_\ell) \subset \cH^{(\ell)}.
\end{equation}

Now for the rest of Step 3, let $k$ be an arbitrary fixed integer. For this integer $k$, the task in the rest of Step~3 here is to establish two Markov triplets connected with the conditions in Definition~\ref{df1.2}.

For each $j\in\Z$, the ordered triplet of $\sigma$-fields
\[
\left(\sigma( Y^{(j)}_u,u\le k-2-j),\sigma(Y^{(j)}_{k-1-j}),\sigma(Y^{(j)}_{k-j})\right)
\]
is a Markov triplet. Since the Markov chains $Y^{(j)}$, $j\in\Z$ are independent, one has by (\ref{eq4.10}) and a standard measure-theoretic argument that
\[
\left(\bigvee_{i\le k-2} \cH^{(i)},\cH^{(k-1)},\cH^{(k)}\right)
\]
is a Markov triplet. Hence by (\ref{eq4.11}),
\begin{equation}
\label{eq4.12}
\left( \bigvee_{i\le k-2} \cH^{(i)}, \cH^{(k-1)},\sigma(U_k)\right)
\end{equation}
is a Markov triplet.

Also, by (\ref{eq4.9}) and the fact that $Y^{(k-1-j)}_j \equiv 0$ for $j\le -1$,
\begin{equation}
\label{eq4.13}
\left(\cH^{(k-1)}, \sigma(X_{k-1}),\sigma(U_k)\right)
\end{equation}
is a Markov triplet.

Since $\sigma(X_{k-1}) \subset \cH^{(k-1)}$ by (\ref{eq4.11}), one has that for any event $C \in \sigma(U_k)$, by the sentences containing (\ref{eq4.12}) and (\ref{eq4.13}),
\[
P\left(C\,\,\bigg|\,\, \bigvee_{i\le k-1} \cH^{(i)}\right) = P\left(C\mid\cH^{(k-1)}\right) = P\left( C\mid \sigma(X_{k-1})\right)\,\,\mbox{a.s.;}
\]
and hence the ordered triplet
\[
\left(\bigvee_{i\le k-1}\cH^{(i)},\sigma(X_{k-1}),\sigma(U_k)\right)
\]
is a Markov triplet. Hence by (\ref{eq4.11}) again,
\begin{equation}
\label{eq4.14}
\left(\sigma(U_j,V_j,X_j,j\le k-1),\sigma(X_{k-1}), \sigma(U_k)\right)
\end{equation}
is a Markov triplet. Hence by (\ref{eq4.7}) and a standard measure-theoretic argument,
\[
\left(\sigma(U_j,V_j,X_j,j\le k-1),\sigma(X_{k-1}),\sigma(U_k)\vee \sigma(V_k)\right)
\]
is a Markov triplet. Hence by (\ref{eq4.4}),
\begin{equation}
\label{eq4.15}
\left(\sigma(X_j,j\le k-1),\sigma(X_{k-1}),\sigma(X_k)\right)
\end{equation}
is a Markov triplet.

Since $k\in\Z$ was arbitrary, the sequence $X$ is by (\ref{eq4.15}) a Markov chain, a property stipulated in Definition \ref{df1.2}. Eq.\ (\ref{eq4.14}) is (again for arbitrary $k\in\Z$) the other ``Markov triplet'' property stipulated in Definition \ref{df1.2}. The other properties in Definition \ref{df1.2} (and the subsequent paragraph) were verified in (\ref{eq4.2}), (\ref{eq4.4}), (\ref{eq4.5}), (\ref{eq4.7}), and (\ref{eq4.8}). That completes the verification that the sequence $X$ is an INAR(1) model with Poisson innovations.

To complete the proof of Theorem \ref{th1.3}, all that remains is to show that the sequence $X$ is $\rho^*$-mixing.

\textbf{Step 4.} \textit{Proof that $X$ is $\rho^*$-mixing.}  For each $j\in\Z$, define the ``shifted random sequence'' $\widetilde Y^{(j)} := (\widetilde Y^{(j)}_k, k\in\Z)$ by $\widetilde Y^{(j)}_k := Y^{(j)}_{k-j}$. Then by (\ref{eq4.1}) (or (\ref{eq4.10})--(\ref{eq4.11})), for each $k\in\Z$, $\sigma(X_k)\subset \bigvee_{j\in\Z} \sigma(\widetilde Y^{(j)}_k)$. Hence by Lemma \ref{lm2.1} and the first two paragraphs of Step~1, for any $n\in\N$,
\[
\rho^*(X,n) \le\sup_{j\in\Z} \rho^*(\widetilde Y^{(j)}, n) = \sup_{j\in\Z} \rho^*(Y^{(j)},n) = \rho^*(Y,n)
\]
where the sequence $Y$ is as in Lemma \ref{lm3.2}. By Lemma \ref{lm3.2}(B), that sequence $Y$ is $\rho^*$-mixing. Hence $X$ is $\rho^*$-mixing. That completes the proof of Theorem \ref{th1.3}. $\quad\square$

\acknowledgement{The author thanks Mikhail Lifshits for his encouragement and helpful comments. The author also thanks the organizers of the Workshop on Recent Developments in Statistics for Complex Dependent Data, in Loccum, Germany, August 2015. The author was inspired by talks on INAR and related processes at that conference; and part of the research for this paper was done at that conference. That workshop was partly funded by the German Academic Exchange Service (DAAD) and by the Volkswagen Foundation (program: Niedersaechsisches Vorab for female professors of Lower Saxony).}

\end{document}